\providecommand{\U}[1]{\protect\rule{.1in}{.1in}}
\newtheorem{theorem}{Theorem}[section]
\newtheorem{proposition}[theorem]{Proposition}
\newtheorem{example}[theorem]{Example}
\newtheorem{remark}[theorem]{Remark}
\newtheorem{lemma}[theorem]{Lemma}
\numberwithin{equation}{section}
\begin{document}
\title{On summability of multilinear operators and applications}

\author[Albuquerque]{N. Albuquerque}
\address[N. Albuquerque]{Departamento de Matem\'{a}tica\\
\indent Universidade Federal da Para\'{\i}ba\\
\indent58.051-900 - Jo\~{a}o Pessoa, Brazil}
\email{ngalbuquerque@mat.ufpb.br and ngalbuquerque@pq.cnpq.br}

\author[Ara\'ujo]{G. Ara\'ujo}
\address[G. Ara\'ujo]{Departamento de Matem\'{a}tica\\
\indent Universidade Estadual da Para\'{\i}ba\\
\indent58.429-500 - Campina Grande, Brazil}
\email{gdasaraujo@gmail.com and gustavoaraujo@cct.uepb.edu.br}

\author[Cavalcante]{W. Cavalcante}
\address[W. Cavalcante]{Departamento de Matem\'{a}tica - Federal University of
Pernambuco - Recife - Brazil}
\email{wasthenny@dmat.ufpe.br}

\author[Nogueira]{T. Nogueira}
\address[T. Nogueira]{Departamento de Matem\'{a}tica\\
Universidade Federal da Para\'{\i}ba\\
58.051-900 - Jo\~{a}o Pessoa, Brazil \\
and Departamento de Ci\^{e}ncias Exatas Tecnol\'{o}gicas e Humanas\\
Universidade Federal Rural do Semi-\'{A}rido \\
59.515-000 - Angicos, Brazil.}
\email{tonykleverson@gmail.com and tony.nogueira@ufersa.edu.br}

\author[N\'u\~nez]{D. N\'u\~nez-Alarc\'on}
\address[D. N\'u\~nez-Alarc\'on]{Departamento de Matem\'atica\\
\indent Universidad Nacional de Colombia\\
\indent 111321 - Bogot\'a, Colombia.}
\email{danielnunezal@gmail.com}

\author[Pellegrino]{D. Pellegrino}
\address[D. Pellegrino]{Departamento de Matem\'{a}tic,a\\
\indent Universidade Federal da Para\'{\i}ba\\
\indent 58.051-900 - Jo\~{a}o Pessoa, Brazil.}
\email{dmpellegrino@gmail.com and pellegrino@pq.cnpq.br}

\author[Rueda]{P. Rueda}
\address[P. Rueda]{Departamento de An\'{a}lisis Matem\'{a}tico\\
\indent Universidad de Valencia\\
\indent 46100 Burjassot, Valencia.}
\email{pilar.rueda@uv.es}

\thanks{Mathematics Subject Classification (2010): Primary 47A63; Secondary 47H60}

\thanks{N. Albuquerque is supported by Capes, CNPq 409938/2016-5. T. Nogueira and W. Cavalcante are supported by Capes. D. Pellegrino is supported by CNPq; P. Rueda and D. Pellegrino are supported by Ministerio de Econom\'{\i}a, Industria y Competitividad and FEDER under project MTM2016-77054-C2-1-P. Part of this work was done while P. Rueda was visiting the Department of Mathematical Sciences at Kent State University suported by Ministerio de Educaci\'on, Cultura y Deporte PRX16/00037. She thanks this Department for its kind hospitality.}

\keywords{Absolutely summing operators; Hardy--Littlewood inequality; Linearization of multilinear mappings}

\begin{abstract}
This paper has two clear motivations: a technical and a practical. The technical motivation unifies in a single and crystal clear formulation a huge family of inequalities that have been produced separately in the last 90 years in different contexts. But we do not just join inequalities; our method also create a family of inequalities invisible by previous approaches. The practical motivation is to show that our deeper approach has strength to attack various problems. We provide new applications of our family of inequalities, continuing the recent work by Maia et al., that, by using our main theorem, substantially improved an inequality of Carando et al. which seemed impossible to be achieved by their original method.
\end{abstract}

\maketitle

\tableofcontents

\section{Introduction}

Absolutely summing linear operators (see \cite{diestel}) can be generalized to the multilinear framework by several different approaches. There is a vast recent literature in this line (see \cite{popa, popa1, popa6} and the references therein) and also some works attempting to unify different approaches (see \cite{bc, bpr, serrano}).

The following definition is perhaps the most general approach, recently proposed in \cite{bayart}: Let $m\geq1$, $E_{1},\dots,E_{m}$, $F$ be Banach spaces and $T:E_{1}\times\cdots\times E_{m}\rightarrow F$ be an $m$-linear operator. Let also $\Lambda\subset\mathbb{N}^{m}$. For $r\in(0,\infty)$ and $p\geq1$, we say that $T$ is $\Lambda-(r,p)-$summing if there exists a constant $C>0$ such that for all sequences $x(j)\subset E_{j}^{\mathbb{N}}$, $1\leq j\leq m$,
\[
\left(  \sum_{\mathbf{i}\in\Lambda}\Vert T(x_{\mathbf{i}})\Vert^{r}\right)^{\frac{1}{r}}\leq C\left\Vert x(1)\right\Vert _{w,p}\cdots\left\Vert x(m)\right\Vert _{w,p},
\]
where $T(x_{\mathbf{i}})$ stands for $T(x_{i_{1}}(1),\dots,x_{i_{m}}(m))$ and $\left\Vert x\right\Vert _{w,p}$ stands for the weak $\ell_{p}$-norm of $x$ defined by
\[
\left\Vert x\right\Vert _{w,p}=\sup_{\Vert x^{\ast}\Vert\leq1}\left(\sum_{i=1}^{\infty}|x^{\ast}(x_{i})|^{p}\right)  ^{\frac{1}{p}}.
\]

When $\Lambda=\mathbb{N}^{m}$, we recover the notion of a $(r,p)-$multiple summing map introduced in \cite{bombal,matos}. When $\Lambda=\{(n,\dots,n):n\in\mathbb{N}\}$, we get the definition of a $(r,p)$-absolutely summing maps which was introduced in \cite{AM}. We shall denote by $\pi_{r,p}^{abs}$ this class.

The cases $\Lambda=\mathbb{N}^{m}$ and $\Lambda=\{(n,\dots,n): n\in
\mathbb{N}\}$ are very well studied in the literature (see, for instance,
\cite{popa5, popa6} just to cite some references); in this paper we investigate intermediary situations,
i.e., the cases of sets $\Lambda$ strictly located between $\{(n,\dots,n): n\in
\mathbb{N}\}$ and $\mathbb{N}^{m}$.

For $p\in\lbrack1,\infty]$, as usual, we consider the Banach spaces of weakly $p$-summable sequences
\[
\ell_{p}^{w}(E):=\left\{  (x_{j})_{j=1}^{\infty}\subset E:\left\Vert
(x_{j})_{j=1}^{\infty}\right\Vert _{w,p}<\infty\right\}
\]
and strongly $p$-summable sequences
\[
\ell_{p}(E):=\left\{  (x_{j})_{j=1}^{\infty}\subset E:\left\Vert (x_{j}%
)_{j=1}^{\infty}\right\Vert _{p}:=\left(\sum_{j=1}^\infty \|x_j\|^p\right)^{\frac 1p}<\infty\right\}  .
\]
All along this paper, the topological dual of $E$ is denoted by
$E^{\ast}$ and the conjugate of $1\leq p\leq \infty$ is represented by $p^{\ast}$,
i.e., $\frac{1}{p}+\frac{1}{p^{\ast}}=1$. As usual, $e_{j}$ are
the canonical vectors and%
\[
\left\Vert T\right\Vert :=\sup_{\left\Vert x_{1}\right\Vert ,\ldots,\left\Vert
x_{m}\right\Vert \leq1}\left\Vert T(x_{1},\ldots,x_{m})\right\Vert
\]
for any continuous $m$-linear mapping $T:E_{1}\times\cdots\times
E_{m}\rightarrow F$.
Henceforth $\mathcal{L}(E_{1},\dots,E_{m};F)$ stands for the Banach space of
all bounded $m$-linear operators from $E_{1}\times\cdots\times E_{m}$ to $F$
endowed with this $\sup$ norm.

The canonical isometric isomorphisms (see \cite[Proposition 2.2]{diestel})  $\mathcal{L}(\ell_{p^{\ast}},E)=\ell
_{p}^{w}(E)$ and $\mathcal{L}(c_{0},E)=\ell_{1}^{w}(E)$ tells us that certain
cases of summability of multilinear operators are equivalent to investigate
\[
\left(  \sum_{\mathbf{i}\in\Lambda}\Vert T(e_{\mathbf{i}})\Vert^{r}\right)
^{\frac{1}{r}}\leq C\Vert T\Vert,
\]
for $T:\ell_{p}\times\cdots\times\ell_{p}\rightarrow F$ or $T:c_{0}%
\times\cdots\times c_{0}\rightarrow F$ and this is precisely when the theory
of Hardy--Littlewood inequalities meets the theory of absolutely summing
multilinear operators.


Results related to summability of multilinear operators date back, at least,
to the 30's, when Littlewood proved his seminal $4/3$ inequality. Since then,
several different related results and approaches have appeared, as the
Bohnenblust--Hille (\textit{Annals of Math.}, 1931) and Hardy--Littlewood
(\textit{Quarterly J. Math.}, 1934) inequalities, that can be considered two
keystones of the theory for multilinear operators. In the last
30 years, several multilinear variants of these classical inequalities have
appeared. Let us classify them depending on whether the involved sum is done
in one or all indices.

Let $\mathbb{K}$ be $\mathbb{R}$ or $\mathbb{C},$ $m$ be a positive integer
and $1\leq p_{1},...,p_{m}\leq\infty$. From now on, for $\mathbf{p}%
:=(p_{1},\ldots,p_{m})\in\lbrack1,+\infty]^{m}$, let
\[
\left\vert \frac{1}{\mathbf{p}}\right\vert :=\frac{1}{p_{1}}+\cdots+\frac
{1}{p_{m}}.
\]
We shall also denote $X_{p}:=\ell_{p}$ for $1\leq p<\infty$, and $X_{\infty}:=c_{0}$.

\bigskip

\noindent I - Sums in one index ($\Lambda=\{(n,\dots,n):n\in\mathbb{N}\}$):

\bigskip

\begin{itemize}
\item Aron and Globevnik (\cite{agg}, 1989): For every continuous $m$-linear
form $T:c_{0}\times\cdots\times c_{0}\rightarrow\mathbb{K}$,
\begin{equation}
\sum_{i=1}^{\infty}\left\vert T(e_{i},\ldots,e_{i})\right\vert \leq\left\Vert
T\right\Vert . \label{aronglob}%
\end{equation}

\item Zalduendo (\cite{zal}, 1993): Let $\left\vert \frac{1}{\mathbf{p}%
}\right\vert <1$. For every continuous $m$-linear form $T:X_{p_{1}}%
\times\cdots\times X_{p_{m}}\rightarrow\mathbb{K}$,
\begin{equation}
\left(  \sum\limits_{i=1}^{\infty}\left\vert T\left(  e_{i},...,e_{i}\right)
\right\vert ^{\frac{1}{1-\left\vert \frac{1}{\mathbf{p}}\right\vert }}\right)
^{1-\left\vert \frac{1}{\mathbf{p}}\right\vert }\leq\Vert T\Vert. \label{8822}%
\end{equation}
\end{itemize}

\bigskip

\noindent II - Sums in all indices ($\Lambda=\mathbb{N}^{m}$):

\bigskip

\begin{itemize}
\item Bohnenblust--Hille inequality (\cite{bh}, 1931): There exists a constant $C_{m,\infty}^{\mathbb{K}}\geq1$ such that, for every
continuous $m$--linear form $T:c_{0}\times\cdots\times c_{0}\rightarrow
\mathbb{K}$,
\begin{equation}
\left(  \sum_{i_{1},\ldots,i_{m}=1}^{\infty}\left\vert T(e_{i_{1}}%
,\ldots,e_{i_{m}})\right\vert ^{\frac{2m}{m+1}}\right)  ^{\frac{m+1}{2m}}\leq
C_{m,\infty}^{\mathbb{K}}\left\Vert T\right\Vert . \label{u88}
\end{equation}

\item Hardy--Littlewood (\cite{hardy}, 1934) and Praciano-Pereira (\cite{pra}, 1981): Let $\left\vert \frac{1}{\mathbf{p}}\right\vert \leq\frac{1}{2}$. There
exists a constant $C_{m,\mathbf{p}}^{\mathbb{K}}\geq1$ such that, for every
continuous $m$-linear form $T:X_{p_{1}}\times\cdots\times X_{p_{m}}%
\rightarrow\mathbb{K}$,
\begin{equation}
\left(  \sum_{i_{1},\ldots,i_{m}=1}^{\infty}\left\vert T(e_{i_{1}}%
,\ldots,e_{i_{m}})\right\vert ^{\frac{2m}{m+1-2\left\vert \frac{1}{\mathbf{p}%
}\right\vert }}\right)  ^{\frac{m+1-2\left\vert \frac{1}{\mathbf{p}%
}\right\vert }{2m}}\leq C_{m,\mathbf{p}}^{\mathbb{K}}\left\Vert T\right\Vert .
\label{i99}
\end{equation}

\item Hardy--Littlewood (\cite{hardy}, 1934) and Dimant--Sevilla-Peris (\cite{dimant}, 2016): Let $\frac{1}{2}\leq\left\vert \frac{1}{\mathbf{p}%
}\right\vert <1$. There exists a constant $D_{m,\mathbf{p}%
}^{\mathbb{K}}\geq1$ such that
\begin{equation}
\left(  \sum_{i_{1},\dots,i_{m}=1}^{\infty}\left\vert T(e_{i_{1}}%
,\dots,e_{i_{m}})\right\vert ^{\frac{1}{1-\left\vert \frac{1}{\mathbf{p}%
}\right\vert }}\right)  ^{1-\left\vert \frac{1}{\mathbf{p}}\right\vert }\leq
D_{m,\mathbf{p}}^{\mathbb{K}}\Vert T\Vert\label{i99jagsytsb}%
\end{equation}
for every continuous $m$-linear form $T:X_{p_{1}}\times\cdots\times X_{p_{m}%
}\rightarrow\mathbb{K}$.
\end{itemize}

\bigskip

All exponents involved in the previous inequalities are sharp. An extended version of the Hardy--Littlewood/Praciano-Pereira inequality was presented in \cite{abpsisrael}:

\bigskip

\begin{itemize}
\item Albuquerque et al. (\cite{abpsisrael}, 2016):  Let $\left\vert
\frac{1}{\mathbf{p}}\right\vert \leq\frac{1}{2}$ and $\mathbf{q}:=(q_{1}%
,\dots,q_{m})\in\left[  \left(  1-\left\vert \frac{1}{\mathbf{p}}\right\vert
\right)  ^{-1},2\right]  ^{m}$. There is a constant $C_{m,\mathbf{p}%
,\mathbf{q}}^{\mathbb{K}}\geq1$ such that
\begin{equation}
\left(  \sum\limits_{i_{1}=1}^{\infty}\left(  \cdots\left(  \sum
\limits_{i_{m}=1}^{\infty}\left\vert A\left(  e_{i_{1}},\ldots,e_{i_{m}%
}\right)  \right\vert ^{q_{m}}\right)  ^{\frac{q_{m-1}}{q_{m}}}\cdots\right)
^{\frac{q_{1}}{q_{2}}}\right)  ^{\frac{1}{q_{1}}}\leq C_{m,\mathbf{p}%
,\mathbf{q}}^{\mathbb{K}}\left\Vert A\right\Vert \label{ttr}%
\end{equation}
for every continuous $m$-linear form $A:X_{p_{1}}\times\cdots\times X_{p_{m}%
}\rightarrow\mathbb{K}$ if, and only if,
\[
\frac{1}{q_{1}}+\cdots+\frac{1}{q_{m}}\leq\frac{m+1}{2}-\left\vert {\frac
{1}{\mathbf{p}}}\right\vert .
\]
\end{itemize}

\begin{remark}
Throughout all the paper, the optimal constants of each of the above inequalities will be denoted exactly as they were previously stated.
\end{remark}

\bigskip

Note that:

\begin{itemize}
\item[(a)] Zalduendo's theorem, for $p_{1}=\cdots=p_{m}=\infty$, recovers
Aron--Globevnik's theorem;

\item[(b)] The Hardy--Littlewood/Praciano-Pereira inequality, when
$p_{1}=\cdots=p_{m}=\infty$, recovers the Bohnenblust--Hille inequality;

\item[(c)] If $q_{1}=\cdots=q_{m}=\frac{2m}{m+1-2\left\vert \frac
{1}{\mathbf{p}}\right\vert }$ in \eqref{ttr}, we recover the
Hardy--Littlewood/ Praciano-Pereira inequality and we will denote
$C_{m,\mathbf{p},\left(  \frac{2m}{m+1-2\left\vert 1/\mathbf{p}\right\vert
},\ldots,\frac{2m}{m+1-2\left\vert 1/\mathbf{p}\right\vert }\right)
}^{\mathbb{K}}$ by $C_{m,\mathbf{p}}^{\mathbb{K}}$. Moreover, if $p_{1}%
=\cdots=p_{m}=p$ we will denote $C_{m,\mathbf{p}}^{\mathbb{K}}$ by
$C_{m,p}^{\mathbb{K}}$.
\end{itemize}

\bigskip

The first main objective of this article is to combine in a single formulation all the above  inequalities that were produced separately and in different contexts and that apparently did not match. We do not do this only for the mathematical beauty of unifying theories that were treated in completely different ways, but because this also provides subtle bits of information that were not accessible, such as, for example, giving a definitive answer to a problem initially considered by D. Carando et al. \cite{Carando} (this substantial improvement was recently made by Maia et al. \cite{Maia} using our main theorem). This and some other findings were only possible at the time when the theories were no longer seen separately. Despite their importance in several fields of mathematics (Quantum Information Theory, Dirichlet series, etc), the optimal constants of the $m$-linear inequalities of Bohnenblust--Hille and Hardy--Littlewood are still unknown. For the real case of the Bohnenblust-Hille  inequality it is known that the optimal constants are not contractive. As an application of our unified approach, we can analyze under what conditions we can improve the constants of such inequalities so that their constants are contractive. In fact, in Section \ref{123}, we will study how the consideration of our unified inequalities improves the Bohnenblust-Hille and Hardy-Littlewood constants so that the constants of slight variants of these inequalities become even contractive.

Let $n$ be a positive integer and from now on $e_{i}^{n}$ denotes the $n-$tuple $\left(e_{i},{...},e_{i}\right)  $. Furthermore, if $n_{1},\ldots,n_{k}\geq1$ are such that $n_{1}+\cdots+n_{k}%
=m$, then $\left(  e_{i_{1}}^{n_{1}},\ldots,e_{i_{k}}^{n_{k}}\right)  $ represents the $m-$tuple:
\[
(e_{i_{1}},\overset{\text{{\tiny $n_{1}$\thinspace times}}}{\ldots},e_{i_{1}%
},\ldots,e_{i_{k}},\overset{\text{{\tiny $n_{k}$\thinspace times}}}{\ldots
},e_{i_{k}}).
\]
The main result of this paper (Theorem \ref{hl_var}) extends and unifies
\eqref{aronglob}, \eqref{8822}, \eqref{u88}, \eqref{i99}, \eqref{i99jagsytsb}
and (\ref{ttr}), by considering intermediary setups for $\Lambda$. Theorem
\ref{hl_var} provides the following particular case whenever $p_{1}%
=\cdots=p_{m}=p$, which has a more friendly statement.

\begin{theorem}
\label{thmain2} Let $m\geq k\geq1,\,m<p\leq\infty$ and let $n_{1},\dots
,n_{k}\geq1$ be such that $n_{1}+\cdots+n_{k}=m$. Then, for every continuous
$m$--linear form $T:X_{p}\times\cdots\times X_{p}\rightarrow\mathbb{K}$, there
is a constant $M_{k,m,p}^\mathbb{K}\geq1$ such that
\[
\left(  \sum_{i_{1},\dots,i_{k}=1}^{\infty}\left\vert T\left(  e_{i_{1}%
}^{n_{1}},\dots,e_{i_{k}}^{n_{k}}\right)  \right\vert ^{\rho}\right)
^{\frac{1}{\rho}}\leq M_{k,m,p}^\mathbb{K}\left\Vert T\right\Vert ,
\]
with
\[
\rho=\frac{p}{p-m}\text{ for }m<p\leq2m\text{ and }M_{k,m,p}^\mathbb{K}\leq
D_{k,(\frac{p}{n_{1}},...,\frac{p}{n_{k}})}^{\mathbb{K}}%
\]
and
\begin{equation}
\rho=\frac{2kp}{kp+p-2m}\text{ for }p\geq2m\text{ and }M_{k,m,p}^\mathbb{K}\leq
C_{k,(\frac{p}{n_{1}},...,\frac{p}{n_{k}})}^{\mathbb{K}}. \label{765f}%
\end{equation}
Above, $C_{k,(\frac{p}{n_{1}},...,\frac{p}{n_{k}})}^{\mathbb{K}}$ and $D_{k,(\frac{p}{n_{1}},...,\frac{p}{n_{k}})}^{\mathbb{K}}$ are the constants
from \eqref{i99} and \eqref{i99jagsytsb}, respectively. Moreover, in both
cases, the exponent $\rho$ is optimal.
\end{theorem}

\begin{remark}
It seems to be interesting to stress that the optimal exponent for
the case $p>2m$ is not the exponent of the $k$-linear case. It is a kind of
combination of the cases of $k$-linear and $m$-linear forms, as it can be seen
in (\ref{765f}). In general we have the following:

\begin{itemize}
\item If $m<p<2m$ the optimal exponent depends only on $m$;
\item If $p=2m$, the optimal exponent does not depend on $m$ or $k$.
\item If $2m<p<\infty$, the optimal exponent depends on $m$ and $k$;
\item If $p=\infty$, the optimal exponent depends only on $k$.
\end{itemize}
\end{remark}

The proof of the main result combines \ two different tools based on tensor
products. Firstly, we prove a $k$-linearization method for $n$-linear
operators ($n\geq k$) which is an inductive refinement of the well known
linearization method. Secondly, we use the description of the diagonal of the
tensor product of $\ell_{p}$ spaces based on \cite[Theorem 1.3]{ArFa} and
\cite[Example 2.23(b)]{Ry}. It worths mentioning that the Zalduendo and
Aron-Globevnik inequalities can be proved in a straightforward way by means of
this technique (see Remark \ref{555}).

The search of optimal constants for the Bohnenblust-Hille inequality is an active research area nowadays (see for instance \cite{abpsisrael, apjfa,bohr,Ann} and the references therein). Very recently, our main Theorem (Theorem \ref{hl_var}) was applied in \cite{Maia} to show that the asymptotic constants of the Bohnenblust--Hille inequality for complex $m$-homogeneous polynomials whose monomials have a uniformly bounded number of variables do not depend on $m$. This is a striking result since the prior work \cite{Carando}, using a completely different technique, just  obtained constants growing polynomially with $n$. Section \ref{123} provides applications of our main result (Theorem \ref{hl_var}), in the analysis of the contractivity of the constants appearing in the inequalities when considering special sets $\Lambda$. We will prove that the Bohnenblust--Hille and Hardy--Littlewood inequalities are somewhat \textquotedblleft almost\textquotedblright\ contractive. More precisely, if $m,k,n_{1},\ldots,n_{k}\geq1$ are positive integers such that $n_{1}+\cdots+n_{k}=m$, by considering sums over the index set $\Lambda \subset \mathbb{N}^m$ that gathers all $m$-tuples
\[
\left(i_{1},
\overset{ \text{{\tiny $n_{1}$\thinspace times}} } {\ldots},
i_{1},\ldots,i_{k},
\overset{\text{{\tiny $n_{k}$\thinspace times}}} {\ldots},
i_{k}\right),
\quad i_1,\ldots,i_k \in \mathbb{N},
\]
(notice that $\Lambda$ is composed by $k$ \textquotedblleft blocks\textquotedblright) and if $k=k(m)$ is such that
\[
\lim_{m\rightarrow\infty}\frac{k\log k}{m}=0,
\]
then Theorem \ref{BH_contrac} will provide the contractivity of the Bohnenblust--Hille inequality:
\[
\lim_{m\rightarrow\infty}M_{k,m,\infty}^\mathbb{K}=1.
\]
A similar result is proved for the Hardy--Littlewood inequality (Theorem \ref{HL_contrac}).

\section{Bohnenblust--Hille and Hardy--Littlewood for block-type sets $\Lambda$}

Besides motivating the introduction of a new approach to the theory of
summability of multilinear operators, the main purpose of this section is to
present a unified version of the Bohnenblust--Hille and the Hardy--Littlewood
inequalities with partial sums (i.e., we shall consider sums allowed to run
over a set $\Lambda$ with less indices) which also recovers Zalduendo's and
Aron--Globevnik's inequalities. A tensorial perspective will present an
important role on this matter, establishing an intrinsic relationship between
the exponents and constants involved and the number of indices taken on the sums.

We shall need to introduce some other terminologies. The product
$\widehat{\otimes}_{j\in\{1,\ldots,n\}}^{\pi}E_{j}=E_{1}\widehat{\otimes}^{\pi}%
\cdots\widehat{\otimes}^{\pi}E_{n}$ denotes the completed projective $n$-fold
tensor product of $E_{1},\ldots,E_{n}$. The tensor $x_{1}\otimes\cdots\otimes
x_{n}$ is denoted for short by $\otimes_{j\in\{1,\ldots,n\}}x_{j}$, whereas
$\otimes_{n}x$ denotes the tensor $x\otimes\cdots\otimes x$. In a similar way,
$\times_{j\in\{1,\ldots,n\}}E_{j}$ denotes the product space $E_{1}%
\times\cdots\times E_{n}$.

Recall that $X_{p}=\ell_{p}$ if $1\leq p<\infty$ and $X_{p}=c_{0}$ if
$p=\infty$. Let $n$ be a positive integer and $1\leq p_{1},\ldots,p_{n}\leq\infty$ be such that $\frac
{1}{p_{1}}+\cdots+\frac{1}{p_{n}}<1$. From now on in this section $r,s$ are
defined by $\frac{1}{r}=\frac{1}{p_{1}}+\cdots+\frac{1}{p_{n}}$ and $\frac
{1}{s}+\frac{1}{r}=1$. Let $D_{r}\subset X_{p_{1}}\widehat{\otimes}^{\pi}%
\cdots\widehat{\otimes}^{\pi}X_{p_{n}}$ be the linear span of the tensors
$\otimes_{n}e_{i}$ and $\overline{D}_{r}$ be its closure.

Additionally, we will use the following notation: for Banach spaces $E_{1},\ldots,E_{m}$ and
an element $x\in E_{j}$, for some $j\in\{1,\ldots,m\}$, the symbol $x_{j}\cdot
e_{j}$ represents the vector $x_{j}\cdot e_{j}\in E_{1}\times\cdots\times
E_{m}$ such that its $j$-th coordinate is $x_{j}\in E_{j}$, and $0$ otherwise.

The following lemma, although known for $1\leq p_{1},\ldots,p_{n}<\infty$ (see
\cite[Theorem 1.3]{ArFa}), is the key of Theorem \ref{hl_var} and so, we give
a constructive proof inspired in \cite[Example 2.23(b)]{Ry}.

\begin{lemma}
The map $u_{r}:X_{r} \to\overline{D_{r}}$, given by $u_{r}(\sum_{i=1}^{\infty
}a_{i}e_{i})=\sum_{i=1}^{\infty}a_{i} \, \otimes_{n}\!\!e_{i}$, is an
isometric isomorphism onto.
\end{lemma}

\begin{proof}
For the sake of simplicity we will show only the case $1\leq p_{1}%
,\ldots,p_{n}<\infty$. In all the other cases, that is, when one or more
$X_{i}$'s are $c_{0}$, the proof can be easily adapted.

Let $\theta=\sum_{i=1}^{k}a_{i}\otimes_{n}\!\!e_{i}$. Using the orthogonality
of the Rademacher system, we get
\[
\theta=\int_{[0,1]^{n-1}}\otimes_{j=1}^{n-1}\left(  \sum_{i=1}^{k}%
|a_{i}|^{\frac{r}{p_{j}}}r_{i}(t_{j})e_{i}\right)  \otimes\left(  \sum
_{i=1}^{k}\mathrm{sgn}(a_{i})|a_{i}|^{\frac{r}{p_{n}}}r_{i}(t_{1})\cdots
r_{i}(t_{n-1})e_{i}\right)  dt,
\]
where $dt=dt_{1}\ldots dt_{n-1}$ and $r_{i}$ are the Rademacher functions and $\mathrm{sgn}(a)$ is the scalar
of modulus $1$ such that $\mathrm{sgn}(a)a=\left\vert a\right\vert $. Hence,
\begin{align*}
\pi\left(  \theta\right) & \leq\sup_{\underset{1\leq j\leq n-1}{0\leq t_{j}\leq1}}\left[  \prod_{j=1}^{n-1}\left\Vert \sum_{i=1}^{k}|a_{i}|^{\frac{r}{p_{j}}}r_{i}(t_{j})e_{i}\right\Vert _{p_{j}}\right]  \left\Vert \sum_{i=1}^{k}r_{i}(t_{1})\cdots r_{i}(t_{n-1})\mathrm{sgn}(a_{i})|a_{i}|^{\frac{r}{p_{n}}}e_{i}\right\Vert _{p_{n}} \\
& =\Vert(a_{i})_{i=1}^{k}\Vert_{r}.
\end{align*}

To prove $\Vert(a_{i})_{i=1}^{k}\Vert_{r}\leq\pi(\theta)$, consider the
$n$--linear form on $\ell_{p_{1}}\times\cdots\times\ell_{p_{n}}$ given by
\[
B(x^{(1)},\ldots,x^{(n)}):=\sum_{i=1}^{k}b_{i}x_{i}^{(1)}\cdots x_{i}^{(n)}%
\]
where $b_{i}={\mathrm{sgn}}(a_{i})\frac{|a_{i}|^{\frac{r}{s}}}{\Vert
(a_{i})_{i=1}^{k}\Vert_{r}^{\frac{r}{s}}}$. By H\"{o}lder's inequality,
\[
\Vert B\Vert=\sup_{\underset{1\leq j\leq n}{x^{(j)}\in B_{\ell_{p_{j}}}}%
}\left\vert \sum_{i=1}^{k}b_{i}x_{i}^{(1)}\cdots x_{i}^{(n)}\right\vert
\leq\sup_{\underset{1\leq j\leq n}{x^{(j)}\in B_{\ell_{p_{j}}}}}\Vert
(b_{i})_{i=1}^{k}\Vert_{s}\Vert x^{(1)}\Vert_{p_{1}}\cdots\Vert x^{(n)}%
\Vert_{p_{n}}=1.
\]
Therefore,
\[
\pi(\theta)\geq|\langle\theta,B\rangle|=\left\vert \sum_{i=1}^{k}a_{i}%
B(e_{i},\ldots,e_{i})\right\vert =\left\vert \sum_{i=1}^{k}a_{i}%
b_{i}\right\vert =\left(  \sum_{i=1}^{k}|a_{i}|^{r}\right)  ^{\frac{1}{r}}%
\]
and thus $\pi(\theta)=\Vert(a_{i})_{i=1}^{k}\Vert_{r}$. By extending the
isometric isomorphism to the completions, we get that $\overline{D}_{r}$ is
isometrically isomorphic to $\ell_{r}$.
\end{proof}

Using the isometry between $\overline{D_{r}}$ and $\ell_{r}$ provided in the
preceding lemma, we get:

\begin{lemma}
\label{norma1p} The sequence $\left(  \otimes_{n} e_{i}\right)  _{i\in
\mathbb{N}}$ belongs to $\ell_{s}^{w} \left(  X_{p_{1}} \widehat{\otimes}^{\pi}\cdots\widehat{\otimes}^{\pi}X_{p_{n}} \right)  $ and $$\left\Vert\left(  \otimes_{n} e_{i} \right)  _{i\in\mathbb{N}} \right\Vert _{w,s}=1.$$
\end{lemma}

\begin{proof}
Observe that
\begin{align*}
\left\Vert \left(  \otimes_{n} e_{i} \right)  _{i\in\mathbb{N}} \right\Vert
_{w,s}  &  = \sup_{\varphi\in B_{ \left(  X_{p_{1}} \widehat{\otimes}^{\pi
}\cdots\widehat{\otimes}^{\pi}X_{p_{n}}\right)  ^{\ast}}} \left(  \sum
_{i=1}^{\infty} |\varphi(\otimes_{n} e_{i})|^{s}\right)  ^{\frac{1}{s}} \\
& =
\sup_{\varphi\in B_{ \left(  \overline{D_{r}} \right)  ^{\ast}}} \left(
\sum_{i=1}^{\infty} |\varphi(\otimes_{n} e_{i})|^{s}\right)  ^{\frac{1}{s}} = \sup_{\varphi\in B_{ \ell_{s} }} \left(  \sum_{i=1}^{\infty}
|\varphi(e_{i})|^{s}\right)  ^{\frac{1}{s}} = 1.
\end{align*}
\end{proof}

The following result is a kind of $k$-\textquotedblleft
linearization\textquotedblright\ of a given $m$-linear operator and will be
used in the proof of our main result.

\begin{proposition}
\label{lin} Let $m$ be a positive integer and let $E_{1},\dots,E_{m},F$ be
Banach spaces. Let $1\leq k\leq m$ and $I_{1},\ldots,I_{k}$ be pairwise
disjoint non-void subsets of $\{1,\ldots,m\}$ such that $\cup_{j=1}^{k}%
I_{j}=\{1,\ldots,m\}$. Then given $T\in{\mathcal{L}}(E_{1},\ldots,E_{m};F)$,
there is a unique $\widehat{T}\in{\mathcal{L}}(\widehat{\otimes}_{j\in I_{1}}%
^{\pi}E_{j},\ldots,\widehat{\otimes}_{j\in I_{k}}^{\pi}E_{j};F)$ such that
\[
\widehat{T}(\otimes_{j\in I_{1}}x_{j},\dots,\otimes_{j\in I_{k}}x_{j}%
)=T(x_{1},\dots,x_{m})
\]
and $\Vert\widehat{T}\Vert=\Vert T\Vert$. The correspondence $T\leftrightarrow
\widehat{T}$ determines an isometric isomorphism between the spaces
${\mathcal{L}}(E_{1},\ldots,E_{m};F)$ and ${\mathcal{L}}(\widehat{\otimes}_{j\in
I_{1}}^{\pi}E_{j},\ldots,\widehat{\otimes}_{j\in I_{k}}^{\pi}E_{j};F)$.
\end{proposition}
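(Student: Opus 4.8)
The plan is to establish the isometric isomorphism $T \leftrightarrow \widehat{T}$ by exploiting the universal property of the projective tensor product, applied iteratively across the $k$ groups $I_1,\ldots,I_k$. The universal property states that bounded $n$-linear maps out of $E_{j_1} \times \cdots \times E_{j_n}$ correspond bijectively and isometrically to bounded linear maps out of $\hat{\otimes}_{j}^{\pi} E_{j}$. The key observation is that a bounded $m$-linear map $T$ can be viewed, by freezing the variables outside one group $I_\ell$, as a multilinear map in the variables of $I_\ell$ taking values in a space of multilinear maps in the remaining variables; linearizing one group at a time turns the $|I_\ell|$ arguments indexed by $I_\ell$ into a single argument in $\hat{\otimes}_{j \in I_\ell}^{\pi} E_j$.

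First I would fix the target $F$ and recall the canonical identification
\begin{equation*}
\mathcal{L}(E_{1},\dots,E_{m};F) \cong \mathcal{L}\bigl(E_{1},\dots,\widehat{E_{\ell}},\dots,E_{m}; \mathcal{L}(E_{\ell};F)\bigr),
\end{equation*}
which is isometric, together with the defining isometry $\mathcal{L}(E_{i_1},\dots,E_{i_n};G) \cong \mathcal{L}(\hat{\otimes}_{j}^{\pi} E_{i_j};G)$ for any Banach space $G$. Grouping the variables of $I_1$ and linearizing them yields an isometric identification of $\mathcal{L}(E_1,\dots,E_m;F)$ with the space of $(m-|I_1|+1)$-linear maps on $\hat{\otimes}_{j\in I_1}^{\pi} E_j$ times the remaining factors into $F$. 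Iterating this over $I_2,\ldots,I_k$ collapses each group to a single tensor-product argument and produces the $k$-linear map $\widehat{T}$ on $\hat{\otimes}_{j\in I_1}^{\pi} E_j \times \cdots \times \hat{\otimes}_{j\in I_k}^{\pi} E_j$ into $F$. At each step the correspondence is an isometric isomorphism, so the composition $\Vert \widehat{T}\Vert = \Vert T\Vert$ holds and the map $T \mapsto \widehat{T}$ is an isometric isomorphism onto the stated space.

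To pin down the action, I would check the defining relation on elementary tensors: by construction $\widehat{T}(\otimes_{j\in I_1} x_j,\dots,\otimes_{j\in I_k} x_j) = T(x_1,\dots,x_m)$, where the ordering of the arguments on the right is recovered from the partition $\{I_1,\dots,I_k\}$ of $\{1,\dots,m\}$. Uniqueness of $\widehat{T}$ follows because the elementary tensors $\otimes_{j\in I_\ell} x_j$ have dense linear span in $\hat{\otimes}_{j\in I_\ell}^{\pi} E_j$, so two $k$-linear maps agreeing on all such tuples agree everywhere by continuity and multilinearity.

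The main obstacle I anticipate is bookkeeping rather than conceptual difficulty: I must be careful that the iterated linearization is compatible with the norm at each stage — each intermediate space must itself be a Banach space so that the universal property applies again — and that permuting the arguments to match the indexing $I_1,\ldots,I_k$ does not alter the norm, which holds since the sup-norm of a multilinear form is invariant under reordering of its (independent) arguments. The cleanest route is to invoke the well-known identification $\mathcal{L}(E_1,\dots,E_m;F) \cong \mathcal{L}(\hat{\otimes}_{j=1}^{m,\pi} E_j;F)$ (for instance as in Ryan's book, already cited in the excerpt) and its obvious iterated/grouped refinement, reducing the proof to a verification on elementary tensors.
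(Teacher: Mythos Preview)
Your proposal is correct and follows essentially the same approach as the paper: both arguments iterate the universal property of the projective tensor product to collapse each block $I_\ell$ of variables into a single tensor-product argument, checking the norm equality at each step and invoking density of elementary tensors for uniqueness. The only cosmetic difference is that the paper packages the iteration as an induction on $m$ (peeling off the last block $I_k$ at the inductive step, linearizing it, and invoking the hypothesis on the remaining $m-|I_k|$ variables), whereas you describe the iteration directly over the groups $I_1,\ldots,I_k$; the underlying mechanism is identical.
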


\begin{proof}
We will proceed by transfinite induction on $m$. Note that for $m=1$ or $m=2$
there is nothing to be proved ($\widehat{T}$ is just the linearization of $T$
whenever $m=2$ and $k=1$). Assume that the result is true for any positive
integer less than $m$ and let $T\in{\mathcal{L}}(E_{1},\ldots, E_{m};F)$ and
$I_{1},\ldots, I_{k}$ as in the statement. Assume that $|I_{k}|=m_{k}$ and fix
$x_{j}\in E_{j}$, for any $j\in I_{k}$. Fix $\sum_{j\in I_{k}} x_{j}\cdot
e_{j}\in\times_{j\in I_{k}}E_{j}$. Consider the continuous $(m-m_{k})$-linear
mapping given by
\[
T_{\left(  \sum_{j\in I_{k}} x_{j}\cdot e_{j}\right)  } \left(  \sum_{i\in
I_{1}}x_{i}\cdot e_{i}+\cdots+\sum_{i\in I_{k-1}}x_{i}\cdot e_{i}\right)  :=
T(x_{1},\dots,x_{m}).
\]
By the induction hypothesis, there exists a unique
\[
\widetilde{T}\left(  \sum_{j\in I_{k}} x_{j}\cdot e_{j}\right)  \in
{\mathcal{L}}(\hat\otimes_{j\in I_{1}}^{\pi}E_{j}, \ldots, \hat\otimes_{j\in
I_{k-1}}^{\pi}E_{j};F)
\]
such that
\begin{align*}
& \widetilde T \left(  \sum_{j\in I_{k}} x_{j}\cdot e_{j}\right)  \left( \otimes_{i\in I_{1}}x_{i},\ldots,\otimes_{i\in I_{k-1}}x_{i}\right) \\
& = T_{\left(  \sum_{j\in I_{k}} x_{j}\cdot e_{j}\right)  } \left(  \sum_{i\in I_{1}}x_{i}\cdot e_{i}+\cdots+\sum_{i\in I_{k-1}}x_{i}\cdot e_{i} \right) = T(x_{1},\dots,x_{m})
\end{align*}
and
\[
\left\|  \widetilde T \left(  \sum_{j\in I_{k}} x_{j}\cdot e_{j} \right)
\right\|  = \left\|  T_{\left(  \sum_{j\in I_{k}} x_{j}\cdot e_{j} \right)  }
\right\|  .
\]

Define now the $m_{k}$-linear mapping $A:\times_{j\in I_{k}}E_{j}%
\rightarrow{\mathcal{L}}\left(  \widehat{\otimes}_{j\in I_{1}}^{\pi}E_{j}%
,\ldots,\widehat{\otimes}_{j\in I_{k-1}}^{\pi}E_{j};F\right)  $ given by
\[
A\left(  \sum_{i\in I_{k}}y_{i}\cdot e_{i}\right)  :=\widetilde{T}\left(
\sum_{i\in I_{k}}y_{i}\cdot e_{i}\right)
\]
and let $A_{L}\in{\mathcal{L}}\left(  \widehat{\otimes}_{j\in I_{k}}^{\pi}%
E_{j};{\mathcal{L}}\left(  \widehat{\otimes}_{j\in I_{1}}^{\pi}E_{j},\ldots
,\widehat{\otimes}_{j\in I_{k-1}}^{\pi}E_{j};F\right)  \right)  $ its
linearization, i.e., the unique linear map from $\widehat{\otimes}_{j\in I_{k}%
}^{\pi}E_{j}$ into ${\mathcal{L}}(\widehat{\otimes}_{j\in I_{1}}^{\pi}E_{j}%
,\ldots,\widehat{\otimes}_{j\in I_{k-1}}^{\pi}E_{j};F)$ such that $A_{L}\left(
\otimes_{j\in I_{k}}y_{j}\right)  =A\left(  \sum_{j\in I_{k}}y_{j}\cdot
e_{j}\right)  $. Finally, $\widehat{T}:\widehat{\otimes}_{j\in I_{1}}^{\pi}%
E_{j}\times\dots\times\widehat{\otimes}_{j\in I_{k}}^{\pi}E_{j}\rightarrow F$
defined by
\[
\widehat{T}(\theta_{1},\ldots,\theta_{k}):=A_{L}(\theta_{k})(\theta_{1}%
,\ldots,\theta_{k-1})
\]
is $k$-linear, continuous, and satisfies
\begin{align*}
\widehat{T}(\otimes_{j\in I_{1}}x_{j},\ldots,\otimes_{j\in I_{k}}x_{j})  &
=A_{L}\left(  \otimes_{j\in I_{k}}x_{j}\right)  \left(  \otimes_{j\in I_{1}%
}x_{j},\ldots,\otimes_{j\in I_{k-1}}x_{j}\right) \\
&  =\widetilde{T}\left(  \sum_{i\in I_{k}}x_{i}\cdot e_{i}\right)  \left(
\otimes_{j\in I_{1}}x_{j},\ldots,\otimes_{j\in I_{k-1}}x_{j}\right) \\
&  =T(x_{1},\ldots,x_{m})
\end{align*}
and
\begin{align*}
\Vert\widehat{T}\Vert &  =\sup_{\overset{\theta_{j}\in B_{\widehat{\otimes}_{i\in
I_{j}}^{\pi}E_{i}}}{j=1,\ldots,k}}\Vert A_{L}(\theta_{k})(\theta_{1}%
,\ldots,\theta_{k-1})\Vert\\
&  =\Vert A_{L}\Vert=\Vert A\Vert=\sup_{\overset{y_{i}\in E_{i}}{i\in I_{k}}%
}\left\Vert \tilde{T}\left(  \sum_{i\in I_{k}}y_{i}\cdot e_{i}\right)
\right\Vert =\sup_{\overset{y_{i}\in E_{k}}{i\in I_{k}}}\left\Vert
T_{(\sum_{i\in I_{k}}y_{i}\cdot e_{i})}\right\Vert =\Vert T\Vert.
\end{align*}
\end{proof}

Now we prove our main result, which unifies \eqref{aronglob}, \eqref{8822},
\eqref{u88}, \eqref{i99}, \eqref{i99jagsytsb} and (\ref{ttr}).

\begin{theorem}
\label{hl_var} Let $1\leq k\leq m$ and $n_{1},\dots,n_{k}\geq1$ be positive
integers such that $n_{1}+\cdots+n_{k}=m$ and assume that
\[
\mathbf{p}:=\left(  p_{1}^{(1)},\overset{\text{{\tiny $n_{1}$\thinspace
times}}}{\dots},p_{n_{1}}^{(1)},\ldots,p_{1}^{(k)},\overset
{\text{{\tiny $n_{k}$\thinspace times}}}{\dots},p_{n_{k}}^{(k)}\right)
\in\lbrack1,\infty]^{m}%
\]
is such that $0\leq\left\vert \frac{1}{\mathbf{p}}\right\vert <1$. Let $\mathbf{r}:=(r_1,\ldots,r_k)$ with $r_{i}$ given by $\frac{1}{r_{i}}=\frac{1}{p_{1}^{(i)}}+\cdots+\frac{1}{p_{n_{i}%
}^{(i)}}$, $i=1,\ldots,k$.

\begin{itemize}
\item[(1)] If $0\leq\left\vert \frac{1}{\mathbf{p}}\right\vert \leq\frac{1}%
{2}$ and $\mathbf{q}:=\left(  q_{1},\dots,q_{k}\right)  \in\left[  \left(
1-\left\vert \frac{1}{\mathbf{p}}\right\vert \right)  ^{-1},2\right]  ^{k}$
then, for every continuous $m$--linear form $T:\left(  \times_{1\leq i\leq
n_{1}}X_{p_{i}^{(1)}}\right)  \times\dots\times\left(  \times_{1\leq i\leq
n_{k}}X_{p_{i}^{(k)}}\right)  \rightarrow\mathbb{K}$,
\begin{equation}
\left(  \sum_{i_{1}=1}^{\infty}\left(  \cdots\left(  \sum_{i_{k}=1}^{\infty} \left\vert T\left(  e_{i_{1}}^{n_{1}},\dots,e_{i_{k}}^{n_{k}}\right)
\right\vert ^{q_{k}}\right)  ^{\frac{q_{k-1}}{q_{k}}}\dots\right)
^{\frac{q_{1}}{q_{2}}}\right)  ^{\frac{1}{q_{1}}}\leq C_{k,\mathbf{r},\mathbf{q}}^{\mathbb{K}}\left\Vert T\right\Vert
\label{clas}
\end{equation}
if and only if $\left\vert \frac{1}{\mathbf{q}}\right\vert \leq\frac{k+1}%
{2}-\left\vert \frac{1}{\mathbf{p}}\right\vert .$ In other words, the
exponents are optimal.

\item[(2)] If $\frac{1}{2}\leq\left\vert \frac{1}{\mathbf{p}}\right\vert <1$
then, for every continuous $m$--linear form $T:\left(  \times_{1\leq i\leq
n_{1}}X_{p_{i}^{(1)}}\right)  \times\dots\times\left(  \times_{1\leq i\leq
n_{k}}X_{p_{i}^{(k)}}\right)  \rightarrow\mathbb{K}$,
\begin{equation}
\left(  \sum_{i_{1},\dots,i_{k}=1}^{\infty}\left\vert T\left(  e_{i_{1}%
}^{n_{1}},\dots,e_{i_{k}}^{n_{k}}\right)  \right\vert ^{\frac{1}{1-\left\vert
\frac{1}{\mathbf{p}}\right\vert }}\right)  ^{1-\left\vert \frac{1}{\mathbf{p}%
}\right\vert }\leq D_{k,\mathbf{r}}^{\mathbb{K}%
}\left\Vert T\right\Vert . \label{hgv}%
\end{equation}
Moreover, the exponent in (\ref{hgv}) is optimal.
\end{itemize}
\end{theorem}

\begin{proof}
(1) Assume that $\left\vert \frac{1}{\mathbf{q}}\right\vert \leq\frac{k+1}%
{2}-\left\vert \frac{1}{\mathbf{p}}\right\vert .$ We shall use the notation
$$(p_{1}^{(1)},\ldots,p_{n_{1}}^{(1)},\ldots,p_{1}^{(k)},\ldots,p_{n_{k}}^{(k)})=(p_{1},\ldots,p_{m}).$$ We take the $k$-linear mapping given in
Proposition \ref{lin} $\widehat{T}:\widehat{\otimes}_{1\leq i\leq n_{1}}^{\pi
}X_{p_{i}^{(1)}}\times\dots\times\widehat{\otimes}_{1\leq i\leq n_{k}}^{\pi
}X_{p_{i}^{(k)}}\rightarrow\mathbb{K}$ , that satisfies
\[
\widehat{T}\left(  \otimes_{1\leq i\leq n_{1}}x_{i}^{(1)},\dots,\otimes_{1\leq
i\leq n_{k}}x_{i}^{(k)}\right)  =T\left(  x_{1}^{(1)},\dots,x_{n_{1}}%
^{(1)},\ldots,x_{1}^{(k)},\ldots,x_{n_{k}}^{(k)}\right)  .
\]
Then,
\[
\widehat{T}\left(  \otimes_{n_{1}}e_{i_{1}},\dots,\otimes_{n_{k}}e_{i_{k}%
}\right)  =T\left(  e_{i_{1}}^{n_{1}},\dots,e_{i_{k}}^{n_{k}}\right)  ,
\]
and $\Vert\widehat{T}\Vert=\Vert T\Vert$. Thus
\[%
\begin{array}
[c]{l}%
\displaystyle\left(  \sum_{i_{1}=1}^{\infty}\left(  \cdots\left(  \sum
_{i_{k}=1}^{\infty}\left\vert T\left(  e_{i_{1}}^{n_{1}},\dots,e_{i_{k}%
}^{n_{k}}\right)  \right\vert ^{q_{k}}\right)  ^{\frac{q_{k-1}}{q_{k}}}%
\dots\right)  ^{\frac{q_{1}}{q_{2}}}\right)  ^{\frac{1}{q_{1}}}\vspace
{0.2cm}\\
\displaystyle=\left(  \sum_{i_{1}=1}^{\infty}\left(  \cdots\left(  \sum
_{i_{k}=1}^{\infty}\left\vert \widehat{T}\left(  \otimes_{n_{1}}e_{i_{1}%
},\dots,\otimes_{n_{k}}e_{i_{k}}\right)  \right\vert ^{q_{k}}\right)
^{\frac{q_{k-1}}{q_{k}}}\dots\right)  ^{\frac{q_{1}}{q_{2}}}\right)
^{\frac{1}{q_{1}}}.
\end{array}
\]
For each $j=1,\ldots,k$, we take $u_{j}:X_{r_{j}}\rightarrow\overline
{D_{r_{j}}}$ defined by $u_{j}\left(  \sum_{i=1}^{\infty}a_{i}e_{i}\right)
=\sum_{i=1}^{\infty}a_{i}\otimes_{n_{j}}\!\!e_{i}$. Lemma \ref{norma1p} will
give
\[
\left\Vert u_{j}\right\Vert =\left\Vert \left(  \otimes_{n}e_{i}\right)
_{i\in\mathbb{N}}\right\Vert _{w,r_{j}^{\ast}}=1.
\]
Finally, it is sufficient to deal with the $k$-linear operator $S:X_{r_{1}%
}\times\cdots\times X_{r_{k}}\rightarrow\mathbb{K}$ defined by
\[
S(z_{1},\dots,z_{k}):=\widehat{T}\left(  u_{1}(z_{1}),\dots,u_{k}%
(z_{k})\right)  ,
\]
which is bounded and fulfills $\Vert S\Vert\leq\Vert\widehat{T}\Vert$.
Combining this with (\ref{ttr}) and observing that%
\[
\frac{1}{r_{1}}+\cdots+\frac{1}{r_{k}}=\left\vert \frac{1}{\mathbf{p}%
}\right\vert ,
\]
the result follows. To show that the inequalities \eqref{clas} forces the
exponent to be $\left\vert \frac{1}{\mathbf{q}}\right\vert \leq\frac{k+1}%
{2}-\left\vert \frac{1}{\mathbf{p}}\right\vert ,$ it suffices to prove by
(1.6) that
\[
\left(  \sum\limits_{j_{1}=1}^{\infty}\left(  \cdots\left(  \sum
\limits_{j_{k}=1}^{\infty}\left\vert A\left(  e_{j_{1}},\ldots,e_{j_{k}%
}\right)  \right\vert ^{q_{k}}\right)  ^{\frac{q_{k-1}}{q_{k}}}\cdots\right)
^{\frac{q_{1}}{q_{2}}}\right)  ^{\frac{1}{q_{1}}}\leq C_{k,\mathbf{r},\mathbf{q}}^{\mathbb{K}}\left\Vert A\right\Vert,
\]
for all continuous $k$-linear forms $A:X_{r_{1}}\times\cdots\times X_{r_{k}%
}\rightarrow\mathbb{K}$ whenever \eqref{clas} is fulfilled by all bounded
$m$--linear forms $T:\left(  \times_{1\leq i\leq n_{1}}X_{p_{i}^{(1)}}\right)
\times\dots\times\left(  \times_{1\leq i\leq n_{k}}X_{p_{i}^{(k)}}\right)
\rightarrow\mathbb{K}$. Let $A:X_{r_{1}}\times\cdots\times X_{r_{k}%
}\rightarrow\mathbb{K}$ be a bounded $k$-linear form. For each $i=1,\ldots,k$
the diagonal space $\overline{D}_{r_{i}}$ is complemented in $X_{p_{1}^{(i)}%
}\widehat{\otimes}^{\pi}\cdots\widehat{\otimes}^{\pi}X_{p_{n_{i}}^{(i)}}$ (see
\cite{ArFa}), and consider the diagonal projection $d_{r_{i}}$ from
$X_{p_{1}^{(i)}}\widehat{\otimes}^{\pi}\cdots\widehat{\otimes}^{\pi}X_{p_{n_{i}}%
^{(i)}}$ onto $\overline{D}_{r_{i}}$, such that $d_{r_{i}}(\sum_{j_{1}%
,\ldots,j_{n_{i}}}a_{(j_{1},\ldots,j_{n_{i}})}e_{j_{1}}\otimes\cdots\otimes
e_{j_{n_{i}}})$ is equal to $\sum_{j_{1},\ldots,j_{n_{i}}}a_{(j_{1}%
,\ldots,j_{n_{i}})}e_{j_{1}}\otimes\cdots\otimes e_{j_{n_{i}}}$ if
$j_{1}=\cdots=j_{n_{i}}$ and to $0$ otherwise. Define the $m$-linear map
$T_{A}:X_{p_{1}}\times\cdots\times X_{p_{m}}\rightarrow\mathbb{K}$ by
\begin{align*}
&T_{A}(x_{1}^{(1)},\ldots,x_{n_{1}}^{(1)},\ldots,x_{1}^{(k)},\ldots,x_{n_{k}%
}^{(k)}) \\
&:=A(u_{r_{1}}^{-1}\circ d_{r_{1}}(x_{1}^{(1)}\otimes\ldots\otimes
x_{n_{1}}^{(1)}),\ldots,u_{r_{k}}^{-1}\circ d_{r_{k}}(x_{1}^{(k)}\otimes
\ldots\otimes x_{n_{k}}^{(k)})).
\end{align*}
The following equalities give the result:
\begin{align*}
T_{A}(e_{i_{1}}^{n_{1}},\ldots,e_{i_{k}}^{n_{k}})  &  =A(u_{r_{1}}^{-1}\circ
d_{r_{1}}(\otimes_{n_{1}}e_{i_{1}}),\ldots,u_{r_{k}}^{-1}\circ d_{r_{k}%
}(\otimes_{n_{k}}e_{i_{k}}))\\
&  =A(u_{r_{1}}^{-1}(\otimes_{n_{1}}e_{i_{1}}),\ldots,u_{r_{k}}^{-1}%
(\otimes_{n_{k}}e_{i_{k}}))=A(e_{i_{1}},\ldots,e_{i_{k}}).
\end{align*}

(2) The argument is similar to the one of the case $0\leq\left\vert \frac
{1}{\mathbf{p}}\right\vert \leq\frac{1}{2}$, we just need to use
(\ref{i99jagsytsb}) instead of (\ref{ttr}).
\end{proof}

An immediate and illustrative corollary is the case $p_{1}%
=\cdots=p_{m}=p$ which can be stated in a cleaner form (see Theorem
\ref{thmain2}).

\begin{remark}
\label{555} Looking at the proof of Theorem \ref{hl_var} and choosing $k=1$
and $n_{1}=m$ we not only recover Zalduendo's and Aron-Globevnik's theorems
but we also provide an alternative proof for them. In fact, for the sake of
simplicity let us choose $p_{1}=\cdots=p_{m}=p$; let $T:X_{p}\times
\cdots\times X_{p}\rightarrow\mathbb{K}$ be a continuous $m$-linear form and
$p>m$. Denoting by $T_{L}$ the linearization of $T$ and, as usual, letting
$\frac{p}{p-m}=1$ when $p=\infty$, we have
\begin{align*}
\left(  \sum\limits_{j=1}^{\infty}\left\vert T\left(  e_{j},...,e_{j}\right)\right\vert ^{\frac{p}{p-m}}\right)  ^{\frac{p-m}{p}} & =\left(  \sum\limits_{j=1}^{\infty}\left\vert T_{L}\left(  \otimes_{m}^{\pi}e_{j}\right)\right\vert ^{\frac{p}{p-m}}\right)  ^{\frac{p-m}{p}} \\
& \leq\left\Vert T_{L}\right\Vert \left\Vert \left(  \otimes_{m}^{\pi}e_{j}\right)_{j=1}^{\infty}\right\Vert _{w,\frac{p}{p-m}}.
\end{align*}
But, from Lemma \ref{norma1p} we know that $\left\Vert \left(  \widehat{\otimes
}_{m}^{\pi}e_{j}\right)  _{j=1}^{\infty}\right\Vert _{w,\frac{p}{p-m}}=1$ and
since $\left\Vert T_{L}\right\Vert =\left\Vert T\right\Vert $ the proof is
done. Concerning the optimality of the exponents, it can be easily proved
using an idea borrowed from \cite{dimant}. In fact, consider $T_{n}%
:X_{p}\times\cdots\times X_{p}\rightarrow\mathbb{K}$ given by%
\[
T_{n}\left(  x^{(1)},...,x^{(m)}\right)  =\sum\limits_{j=1}^{n}x_{j}%
^{(1)}...x_{j}^{(m)}.
\]
Then, since $\left\Vert T_{n}\right\Vert =n^{1-\frac{m}{p}}$ and
\[
\left(  \sum\limits_{j=1}^{n}\left\vert T_{n}\left(  e_{j},...,e_{j}\right)
\right\vert ^{r}\right)  ^{\frac{1}{r}}=n^{\frac{1}{r}},
\]
we conclude that%
\[
r\geq\frac{p}{p-m}.
\]
\end{remark}

\begin{remark}
Using the canonical isometric isomorphisms for the spaces of weakly summable
sequences ($\mathcal{L}(\ell_{p};E)=\ell_{p^{*}}^{w}(E)$, $1<p<\infty$, and
$\mathcal{L}(c_{0};E)=\ell_{1}^{w}(E)$), all the aforementioned inequalities
can be translated to the theory of absolutely summing operators, motivating a
general approach the encompasses the notions of absolutely summing and
multiple summing operators.
\end{remark}

\section{Applications: Constants associated to special choices of $\Lambda$}\label{123}

For real scalars, from \cite{diniz} we know that in (\ref{u88}) we have%
\[
C_{m,\infty}^{\mathbb{R}}\geq2^{1-\frac{1}{m}},
\]
so the Bohnenblust--Hille inequality for real scalars is obviously non-contractive. In this section, as a consequence of the main result of this paper, we show that the Bohnenblust--Hille inequality is, however, somewhat \textquotedblleft almost\textquotedblright\ contractive. More precisely, we consider sums in certain sets $\Lambda$, i.e.,
\[
\left(  \sum_{i_{1},\dots,i_{k}=1}^{\infty}\left\vert T\left(  e_{i_{1}%
}^{n_{1}},\dots,e_{i_{k}}^{n_{k}}\right)  \right\vert ^{\frac{2m}{m+1}%
}\right)  ^{\frac{m+1}{2m}}\leq M_{k,m,\infty}^\mathbb{K}\left\Vert T\right\Vert ,
\]
and show that if the set $\Lambda$ is composed by a certain number of \textquotedblleft blocks\textquotedblright \ $k:=k(m)$ such that
\[
\lim_{m\to\infty}\frac{k\log k}{m}=0,
\]
then
\[
\lim_{m\to\infty}M_{k,m,\infty}^\mathbb{K}=1.
\]

A similar job is done for the Hardy--Littlewood inequalities.

\subsection{Sets $\Lambda$ for contractivity of the Bohnenblust--Hille
inequality}

It is well known that (for both real and complex scalars)%
\begin{equation}
\left(  \sum_{i_{1},\dots,i_{m}=1}^{\infty}|T(e_{i_{1}},\dots,e_{i_{m}}%
)|^{2}\right)  ^{\frac{1}{2}}\leq\Vert T\Vert\label{dos}%
\end{equation}
for all continuous $m$-linear forms $T:c_{0}\times\cdots\times c_{0}%
\rightarrow\mathbb{K}$. In fact, for every positive integer $n$, by the
Khinchin inequality for multiple sums \cite[page 701]{popa1} (since the
constant of the Khinchin inequality in this case is $1$) we have
\begin{align*}
&\left(  \sum_{i_{1},\dots,i_{m}=1}^{n}|T(e_{i_{1}},\dots,e_{i_{m}})|^{2}\right)^{\frac{1}{2}} \\
&  \leq\left(
\int\limits_{0}^{1}
\cdots
\int\limits_{0}^{1}
\left\vert \sum_{i_{1},\dots,i_{m}=1}^{n}r_{i_{1}}(t_{1})\cdots r_{i_{m}}%
(t_{m})T\left(  e_{i_{1}},\ldots,e_{i_{m}}\right)  \right\vert ^{2}%
dt_{1}\cdots dt_{m}\right)  ^{1/2} \\
&  = \left(
\int\limits_{0}^{1}
\cdots\int\limits_{0}^{1}
\left\vert T\left(  \sum_{i_{1}=1}^{n}r_{i_{1}}(t_{1})e_{i_{1}},\ldots
,\sum_{i_{m}=1}^{n}r_{i_{m}}(t_{m})e_{i_{m}}\right)  \right\vert ^{2} dt_{1}\cdots dt_{m}\right)  ^{1/2} \\
&  \leq \Vert T\Vert.
\end{align*}
The next theorem can be understood as a refinement of \eqref{u88} and shows
when inequalities of the type Bohnenblust-Hille have contractive constants as
the number of variables $m$ increases. It is worth mentioning that if $m$
increases, the number of \textquotedblleft blocks\textquotedblright\ $k$ can
be maintained constant or increased as a function of $m$. By $k=k(m)$ we mean
that $k$ can vary as a function of $m$. This trivially includes the case when
$k$ is kept constant.

\begin{theorem} \label{BH_contrac}
Let $m,k$ be positive integers with $k\leq m$ and let $n_{1},\ldots,n_{k}\in\{0,1,\ldots,m\}$ with $n_{1}+\cdots+n_{k}=m.$ Then
\[
\left(  \sum_{i_{1},\dots,i_{k}=1}^{\infty}\left\vert T\left(  e_{i_{1}%
}^{n_{1}},\dots,e_{i_{k}}^{n_{k}}\right)  \right\vert ^{\frac{2m}{m+1}%
}\right)  ^{\frac{m+1}{2m}}\leq(C_{k,\infty}^{\mathbb{K}})^{\frac{k}{m}}\left\Vert
T\right\Vert 
\]
for all continuous $m$-linear forms $T:c_{0}\times\cdots\times c_{0} \to \mathbb{K}$. Besides, if $k=k(m)$ is so that 
$$
\lim_{m\rightarrow\infty}\frac{k\log k}{m}=0,
$$
then
\[
\lim_{m\rightarrow\infty}(C_{k,\infty}^{\mathbb{K}})^{\frac{k}{m}}=1.
\]
\end{theorem}

\begin{proof}
From Theorem \ref{hl_var} we know that
\begin{equation}
\left(  \sum_{i_{1},\dots,i_{k}=1}^{\infty}\left\vert T\left(  e_{i_{1}}^{n_{1}},\dots,e_{i_{k}}^{n_{k}}\right)  \right\vert ^{\frac{2k}{k+1}}\right)  ^{\frac{k+1}{2k}}\leq C_{k,\infty}^{\mathbb{K}}\left\Vert T\right\Vert\label{non}
\end{equation}
for all continuous $m$--linear forms $T:c_{0}\times\dots\times c_{0}%
\rightarrow\mathbb{K}$. Since%
\[
\frac{1}{\frac{2m}{m+1}}=\frac{\theta}{\frac{2k}{k+1}}+\frac{1-\theta}{2}%
\]
with
\[
\theta=\frac{k}{m},
\]
by (a corollary of) the H\"{o}lder inequality, and using (\ref{dos}) and
(\ref{non}) we have%
\begin{align*}
&  \left(  \sum_{i_{1},\dots,i_{k}=1}^{\infty}\left\vert T(e_{i_{1}}^{n_{1}%
},\dots,e_{i_{k}}^{n_{k}})\right\vert ^{\frac{2m}{m+1}}\right)  ^{\frac
{m+1}{2m}}\\
&  \leq\left[  \left(  \sum_{i_{1},\dots,i_{k}=1}^{\infty}\left\vert
T(e_{i_{1}}^{n_{1}},\dots,e_{i_{k}}^{n_{k}})\right\vert ^{\frac{2k}{k+1}%
}\right)  ^{\frac{k+1}{2k}}\right]  ^{\frac{k}{m}}\left[  \left(  \sum
_{i_{1},\dots,i_{k}=1}^{\infty}\left\vert T(e_{i_{1}}^{n_{1}},\dots,e_{i_{k}%
}^{n_{k}})\right\vert ^{2}\right)  ^{\frac{1}{2}}\right]  ^{1-\frac{k}{m}}\\
&  \leq\left[  \left(  \sum_{i_{1},\dots,i_{k}=1}|T(e_{i_{1}}^{n_{1}}%
,\dots,e_{i_{k}}^{n_{k}})|^{\frac{2k}{k+1}}\right)  ^{\frac{k+1}{2k}}\right]
^{\frac{k}{m}}\Vert T\Vert\\
&  \leq(C_{k,\infty}^{\mathbb{K}})^{\frac{k}{m}}\Vert T\Vert
\end{align*}
and the inequality is proved.

Besides, using the best known estimates for
$C_{k,\infty}^{\mathbb{K}}$ (see \cite[Corollary 3.2]{bohr}) we have
\[
(C_{k,\infty}^{\mathbb{K}})^{\frac{k}{m}}\leq\left(  \alpha k^{\beta}\right)
^{\frac{k}{m}}%
\]
for suitable $\alpha,\beta>0$. Note that%
\[
\lim_{m\rightarrow\infty}\left(  \alpha k^{\beta}\right)  ^{\frac{k}{m}}=1
\]
if, and only if,
\[
\lim_{m\rightarrow\infty}\log  \left(  \alpha k^{\beta}\right)
^{\frac{k}{m}}=0,
\]
if, and only if,
\[
\lim_{m\rightarrow\infty}\frac{k}{m}\left(  \log\alpha
+\beta\log k\right)  =0.
\]
This last equality is valid because
\[
\lim_{m\rightarrow\infty}\frac{k\log k}{m}=0
\]
implies
\[ \lim_{m\rightarrow\infty}\frac{k}{m}=0.
\]
\end{proof}

\begin{example}
It is interesting to verify that
\[
k=\left\lfloor \frac{m}{\left(  \log m\right)  ^{1+\frac{1}{\log\log\log m}}%
}\right\rfloor \text{ and }k=\left\lfloor m^{1-\frac{1}{\log\log m}%
}\right\rfloor
\]
satisfy our hypotheses. This is interesting since it is written as
$k=\left\lfloor m^{1-\varepsilon_{m}}\right\rfloor $ with $\lim_{m\rightarrow
\infty}\varepsilon_{m}=0.$
\end{example}

\subsection{Sets $\Lambda$ for contractivity of the Hardy--Littlewood
inequality}

The Hardy--Littlewood inequalities for $m$--linear forms (see
\cite{hardy,pra}) are in some sense\ natural extensions of the
Bohnenblust--Hille inequality when we replace $c_{0}$ by $\ell_{p}$. These
inequalities assert that for any integer $m\geq2$ and $2m\leq p\leq\infty$,
there exists a constant $C_{m,p}^{\mathbb{K}}\geq1$ such that,
\begin{equation}
\left(  \sum_{j_{1},\ldots,j_{m}=1}^{\infty}\left\vert T(e_{j_{1}}%
,\ldots,e_{j_{m}})\right\vert ^{\frac{2mp}{mp+p-2m}}\right)  ^{\frac
{mp+p-2m}{2mp}}\leq C_{m,p}^{\mathbb{K}}\left\Vert T\right\Vert , \label{i99222}
\end{equation}
for all continuous $m$--linear forms $T\colon\ell_{p}\times\cdots\times
\ell_{p}\rightarrow\mathbb{K}$. The exponent $\frac{2mp}{mp+p-2m}$ is optimal.
Note that taking $p=\infty$ in \eqref{i99222} we recover the Bohnenblust--Hille inequality.

The constants of the Hardy--Littlewood inequality were investigated in recent
papers (see \cite{ap} and the references therein). In this section we
investigate the inequality \eqref{i99222} allowing summability by blocks, in the
lines of what was done in the previous subsection with the Bohnenblust--Hille
inequality. However, the appearance of the new parameter $p$ requires a
refinement of the techniques previously used. From now on let us simplify the notation by defining
\[
\theta := \frac{2m^{2}-4m+p}{2km-2k-2m+p}
\quad \text{ and } \quad \phi:= \frac{k}{m} \cdot \theta.
\]

\begin{theorem}\label{HL_contrac}
Let $m,k$ be positive integers with $k\leq m$ and let $n_{1},\ldots,n_{k}\in\{0,1,\ldots,m\}$ with $n_{1}+\cdots+n_{k}=m$. For $p>2m$, we have
\[
\left(  \sum_{i_{1},\dots,i_{k}=1}^{\infty}\left\vert T\left(  e_{i_{1}}^{n_{1}},\dots,e_{i_{k}}^{n_{k}}\right)  \right\vert ^{\frac{2mp}{mp+p-2m}}\right)  ^{\frac{mp+p-2m}{2mp}}\leq\left(  C_{k,\left(\frac{p}{n_1},\dots,\frac{p}{n_k}\right)}^{\mathbb{K}}\right)
^{\phi}\left\Vert
T\right\Vert,
\]
for all continuous $m$-linear forms $T:\ell_{p}\times\cdots\times\ell_{p} \to\mathbb{K}$. Moreover, if $p=p(m) \geq m^2$ and $k=k(m)$ is such that
\[
\lim_{m\rightarrow\infty}\frac{k\log k}{m}=0,
\]
then
\[
\lim_{m \to \infty} \left(C_{k,\left(\frac{p}{n_1},\dots,\frac{p}{n_k}\right)}^{\mathbb{K}}\right)^{\phi}
=1.
\]
\end{theorem}

\begin{proof}
Theorem \ref{hl_var} asserts that if $1\leq k\leq m$ and $n_{1}%
,\dots,n_{k}\geq1$ are positive integers such that $n_{1}+\cdots+n_{k}=m$,
then there is a constant $M_{k,m,p}^{\mathbb{K}}\geq1$ such that
\begin{equation}
\left(  \sum_{i_{1},\dots,i_{k}=1}^{\infty}\left\vert T\left(  e_{i_{1}}^{n_{1}},\dots,e_{i_{k}}^{n_{k}}\right)  \right\vert ^{\frac{2kp}{kp+p-2m}}\right)  ^{\frac{kp+p-2m}{2kp}}\leq M_{k,m,p}^{\mathbb{K}}\left\Vert
T\right\Vert \label{clas222}
\end{equation}
for all continuous $m$--linear forms $T:\ell_{p}\times\cdots\times\ell
_{p}\rightarrow\mathbb{K}$, and the exponent $\frac{2kp}{kp+p-2m}$ is optimal. In Theorem \ref{hl_var} it is also proved that
\begin{equation}
M_{k,m,p}^{\mathbb{K}}\leq C_{k,\left(\frac{p}{n_1},\dots,\frac{p}{n_k}\right)}^{\mathbb{K}} \label{c22}
\end{equation}
for all $1\leq k\leq m$, where $C_{k,\left(\frac{p}{n_1},\dots,\frac{p}{n_k}\right)}^{\mathbb{K}}$ is the optimal constant
of the Hardy--Littlewood inequality for $k$-linear forms on $\ell_{\frac{p}{n_1}}\times\cdots\times\ell_{\frac{p}{n_k}}$.

It is obvious that
\[
\left(\sum_{i_{1},\dots,i_{k}=1}^{\infty}\left\vert T\left(e_{i_{1}}^{n_{1}},\dots,e_{i_{k}}^{n_{k}}\right)  \right\vert^{\frac{2p}{p-2m+2}}\right)^{\frac{p-2m+2}{2p}} \leq\left(  \sum_{j_{1},\ldots,j_{m}=1}^{\infty}\left\vert T(e_{j_{1}},\ldots,e_{j_{m}})\right\vert ^{\frac{2p}{p-2m+2}}\right)  ^{\frac{p-2m+2}{2p}}
\]
By \cite[Lemma 5.1]{pt} we know that
\begin{equation}
\left(  \sum_{j_{1},\ldots,j_{m}=1}^{\infty}\left\vert T(e_{j_{1}}%
,\ldots,e_{j_{m}})\right\vert ^{\frac{2p}{p-2m+2}}\right)  ^{\frac{p-2m+2}%
{2p}}\leq\left\Vert T\right\Vert \label{hhh}%
\end{equation}
for $p>2m.$ Thus, since
\[
\frac{1}{\frac{2mp}{mp+p-2m}}
= \frac{\phi}
{\frac{2kp}{kp+p-2m}}
+
\frac{1 - \phi}
{\frac{2p}{p-2m+2}},
\]
by (a corollary of) the H\"{o}lder inequality, and using \eqref{clas222}, \eqref{c22} and \eqref{hhh} we have
\[
\left(  \sum_{i_{1},\dots,i_{k}=1}^{\infty}\left\vert T\left(  e_{i_{1}}^{n_{1}},\dots,e_{i_{k}}^{n_{k}}\right)  \right\vert ^{\frac{2mp}{mp+p-2m}}\right)^{\frac{mp+p-2m}{2mp}}
\leq\left(  C_{k,\left(\frac{p}{n_1},\dots,\frac{p}{n_k}\right)}^{\mathbb{K}}\right)^{\phi}
\left\Vert T\right\Vert.
\]

Moreover, from \cite{apjfa} and \cite{bohr}, there are constants $\alpha,\beta>0$ such that
\[
C_{k,\left(\frac{p}{n_1},\dots,\frac{p}{n_k}\right)}^{\mathbb{K}}\leq \sigma_\mathbb{K}^{\frac{2(k-1)m}{p}}\left(\alpha k^{\beta}\right)^{\frac{p-2m}{p}}
\]
for all $m$, where $\sigma_\mathbb{R}=\sqrt{2}$ and $\sigma_\mathbb{C}=\frac{2}{\sqrt{\pi}}$.
Let us see that
\[
\lim_{m\rightarrow\infty}\left(  \sigma_\mathbb{K}^{\frac{2(k-1)m}{p}}\left(\alpha k^{\beta}\right)^{\frac{p-2m}{p}}\right)  ^{\phi}=1.
\]
Indeed, observe that
\[
\lim_{m\rightarrow \infty }\left( \left( \sigma_\mathbb{K}\right) ^{\frac{2(k-1)m}{p}%
}\left( \alpha k^{\beta}\right) ^{\frac{p-2m}{p}}\right) ^{\phi}=1
\]
if, and only if,
\[
\lim_{m\rightarrow \infty }\log \left( \left( \sigma_\mathbb{K} \right) ^{\frac{2(k-1)m}{p}}\left(\alpha k^{\beta}\right) ^{\frac{p-2m}{p}}\right) ^{\phi}=0
\]
if, and only if,
\begin{equation}
\lim_{m\rightarrow \infty }\left( \frac{2(k-1)k}{p} \cdot \theta \cdot
\log \left( \sigma_\mathbb{K}\right)
+\frac{k(p-2m)}{mp}\cdot \theta \cdot
\log \left(\alpha k^{\beta}\right) \right) =0. \label{33}
\end{equation}

Since
\[
\lim_{m\rightarrow\infty}\frac{k\log k}{m}=0
\]
implies
\[
\lim_{m\rightarrow\infty}\frac{k}{m}=0
\]
and since $p\geq m^2$, we have
\[
\lim_{m\rightarrow \infty }\frac{2(k-1)k}{p}=0.
\]
Moreover, observe that
\begin{equation}\label{llllls}
\sup_{m} \theta
<\infty.
\end{equation}
Thus
\[
\lim_{m\rightarrow \infty }\frac{2(k-1)k}{p} 
\cdot \theta \cdot
\log \left(\sigma_\mathbb{K}\right)=0
\]
and \eqref{33} happens if, and only if,
\[
\lim_{m\rightarrow \infty }
\frac{k(p-2m)}{mp} \cdot \theta \cdot
\log \left( \alpha k^{\beta}\right) =0.
\]
Observe that
\begin{align*}
\lim_{m\rightarrow \infty }&
\left[\frac{k(p-2m)}{mp}
\cdot \theta \cdot
\log \left( \alpha k^{\beta}\right)
\right] \\
& =\lim_{m\to\infty} \left[ \frac{p-2m}{p}
\cdot \theta \cdot
\frac{k\log \left( \alpha k^{\beta}\right)}{m}
\right] \\
& =\lim_{m\to\infty} \left[ \frac{p-2m}{p}
\cdot \theta \cdot
\left(\frac{k\log\alpha}{m} + \frac{\beta k \log k}{m} \right)
\right]
\end{align*}
Using \eqref{llllls} again and the boundedness of $(p-2m)/p$ we conclude that
\[
\lim_{m\to\infty} \left[ \frac{p-2m}{p}
\cdot \theta \cdot
\left(\frac{k\log\alpha}{m} + \frac{\beta k \log k}{m} \right)
\right]
=0,
\]
and the proof is done.
\end{proof}


\end{document}